\documentclass{ifacconf}

\usepackage{graphicx}      % include this line if your document contains figures
\usepackage{natbib}
\usepackage{amsmath,amssymb,amsopn,cases,mathrsfs,stmaryrd}
\usepackage{hhline}
\usepackage{blindtext,xcolor}
\usepackage{multirow}
\usepackage[bb=boondox]{mathalfa}
%\usepackage{enumitem}
%\setstretch{1.2}

\newcommand{\R}{\mathbb{R}}
\newcommand{\A}{\mathcal{A}}
\newcommand{\B}{\mathcal{B}}
\newcommand{\C}{\mathcal{C}}
\newcommand{\D}{\mathcal{D}}
\renewcommand{\L}{\mathscr{L}}
\newcommand{\boundary}{\mathscr{B}}
\newcommand{\domain}{\mathscr{D}}
\newcommand{\operator}{\mathscr{A}}
\newcommand{\N}{\mathbb{N}}

\DeclareMathOperator*{\He}{He}
\DeclareMathOperator*{\diag}{diag}
\DeclareMathOperator*{\col}{col}

\newtheorem{de}{Definition}
\newtheorem{theo}{Theorem}

\newtheorem{lemma}{Lemma}
\newtheorem{example}{Example}
\newenvironment{proof}[1][]{\textbf{Proof #1:~}}{\hfill$\square$\\}
\newenvironment{remark}[1][]{\begin{rem}}{\hfill$\blacksquare$\end{rem}}

\begin{document}
\begin{frontmatter}
\title{Integral Quadratic Constraints on Linear Infinite-dimensional Systems for Robust Stability Analysis}
%\title{\LARGE \bf An IQC framework for the robust analysis of linear systems subject to uncertainties modeled by linear PDEs*}
% Title, preferably not more than 10 words.

%\date{\today}  %\today is replaced with the current date

\author[First]{Matthieu Barreau}
\author[Second]{Carsten W. Scherer}
\author[First]{Fr{\'e}d{\'e}ric Gouaisbaut}
\author[First]{Alexandre Seuret}

\address[First]{LAAS-CNRS, Universit\'e de Toulouse, CNRS, UPS, Toulouse, France. (e-mail: barreau,seuret,fgouaisb@laas.fr).}%
\address[Second]{Department of Mathematics, University of Stuttgart, Stuttgart, Germany. (e-mail: carsten.scherer@mathematik.uni-stuttgart.de).}

\thanks[]{This work is supported by the ANR project SCIDiS contract number 15-CE23-0014.}

\begin{abstract}
This paper proposes a framework to assess the
stability of an ordinary differential equation
which is coupled to a 1D-partial differential equation (PDE).
The stability theorem is based on a new result on
Integral Quadratic Constraints (IQCs) and expressed
in terms of two linear matrix inequalities with a moderate
computational burden. The IQCs are not generated using dissipation inequalities involving the whole state of an infinite-dimensional system, but by using
projection coefficients of the infinite-dimensional state. This permits
to generalize our robustness result to many other PDEs.
The proposed methodology is applied to a time-delay system and
numerical results comparable to those in the literature are obtained.
\end{abstract}

\begin{keyword}
Distributed Parameter Systems; Robustness analysis; IQCs; Coupled ODE/PDE.
\end{keyword}
\end{frontmatter}

\section{Introduction}

Many processes are modeled by a finite-dimensional linear system subject to an infinite-dimensional uncertainty \citep{drillingArticle,norm}. Robustness analysis assesses the stability of the resulting coupled system. This paper focuses on an infinite-dimensional type of uncertainties modeled by a linear Partial Differential Equation. Attention has been paid on these special couplings, partly because Time-Delay Systems (TDS) are a subclass thereof \citep{norm,opac-b1100602,safi2017tractable}.

In recent decades, many articles have been dealing with asymptotic stability of TDS, using different robust control tools. A first method is based on the Lyapunov-Krasovskii theorem and requires to find a differentiable functional with suitable properties \citep{opac-b1100602}. Another approach considers dissipativity \citep{willems1972dissipative}. More recently, a general framework for dealing with coupled Ordinary Differential Equation (ODE)  and uncertainties has been proposed by \cite{fu1998robust,jun2001multiplier,megretski1997system}.
%,jun2001multiplier see \cite{veenman2016robust} for a survey. 
This requires to find IQCs to assess stability of the interconnection.
%In the same vein, the Separation Quadratic framework looks at wellposedness of similar interactions \cite{ariba2018stability,668829,peaucelle2007quadratic}.

The methodology presented in \cite{seuret:hal-01065142} shows how to construct a Lyapunov functional for TDS based on the projection coefficients of the infinite-dimensional state. The resulting stability test is made up of tractable Linear Matrix Inequalities (LMIs) with a moderate computational burden. This approach has been applied to a transport equation in \cite{safi2017tractable}, to a heat equation in \cite{BAUDOUIN2019195} and to a wave equation in \cite{barreau2018lyapunov}. %This methodology provides a hierarchy of LMI conditions and in each instance, the numerical results show a very good accuracy with a moderate computational burden.

The aim of this paper is to extend the theory developed in the previous references by using IQCs as exposed in \cite{scherer2018stability}. In this context, a filter providing the projection coefficients of the infinite-dimensional state is derived. Thanks to this filter, a new class of IQCs can be generated, not related to a specific class of PDEs. In that sense, the proposed result is adaptable to many linear PDEs and it is promising to extend these ideas to other classes.
%the stability analysis of many more PDEs can be treated.

%on a more general class of infinite-dimension systems and using IQCs as developed in \cite{scherer2018stability}. The Partial Differential Equation (PDE) under study is a linear equation of degree $m$ with boundary inputs and outputs.

%The organization is as follow. 
In Section~2, the problem is stated and a stability theorem is derived. The filter 
%generating the projection coefficients 
is obtained in Section~3. Section~4 is dedicated to the application of the previous results to a TDS.
%The methodology also provided good results for the heat equation but, due to space limitation, the results are not presented here.
Section~5 provides a numerical example with a transport equation.
%to show the efficiency of the main theorem. 
Finally, conclusions are drawn and several perspectives are proposed.

\textbf{Notation:} $\mathbb{R}^{n \times m}$ stands for the set of real matrices with $n$ rows and $m$ columns and $\col(A, B) = \left[ A^{\top} B^{\top} \right]^{\top}$ for real matrices $A$ and $B$ of appropriate dimensions. $\mathbb{S}^n$ is the set of symmetric matrices% of $\mathbb{R}^{n \times n}$. 
A matrix $A$ is positive definite if $A \succ 0$ %(or $A \in \mathbb{S}^n_{++}$) 
and positive semi-definite if $A \succeq 0$. For a matrix $A$, $A^{\perp}$ is a basis of its null-space and for square matrices, we use $\diag(A, B) = \left[ \begin{smallmatrix} A & 0 \\ 0 & B \end{smallmatrix} \right]$ and $\He(A) = A + A^{\top}$. The Euclidean norm of $v$ is defined as $\| v \|^2 = v^{\top} v$. For brevity, the following notation is used for $\A, \B, \C, \D, P$ and $M$ matrices of appropriate dimensions:
\[
	\mathscr{L} \left(P, M, \left[ \begin{smallmatrix} \A & \B \\ \C & \D \end{smallmatrix} \right] \right) =
	 \left[ \begin{smallmatrix} I & 0 \\ \A & \B \\ \C & \D \end{smallmatrix} \right]^{\top} \left[ \begin{smallmatrix} 0 & P & 0 \\ P & 0 & 0 \\ 0 & 0 & M \end{smallmatrix} \right] \left[ \begin{smallmatrix} I & 0 \\ \A & \B \\ \C & \D \end{smallmatrix} \right].
\]
We also use $C (sI - A)^{-1} B + D =$ \scalebox{0.8}{$\left\llbracket \begin{array}{c|c} A & B \\ \hline C & D \end{array} \right\rrbracket$}.\\
$\partial_x^{(i)}$ means the $i^{\text{th}}$ distributional derivative with respect to the variable $x$. For convenience, a short-hand notation is $\partial_x f = f_x$. The space of square integrable functions on $[0, 1]$ is denoted by $L_2(0,1)$ and $L_{2e}$ is the space of locally square integrable signals on $[0, \infty)$. We work with Sobolev spaces denoted by
\[
	H^m(0,1) = \left\{ f \in L_2(0,1) \ | \ \forall k \leq m, \partial_x^{(k)} f \in L_2(0,1) \right\}.
\]
The canonical inner product on $L_2(0,1)$ is $\langle f, g \rangle = \int_0^1 f^{\top}(s) g(s) ds$ for $f,g \in L_2(0,1)$% and  $\|f\|_{L_2}^2 = \langle f, f \rangle$
. 
%A short-hand notation for $\int_0^1 f^{\top}(x) S f(x) dx$ is $\|f\|_S^2$ if $f \in L_2(0,1)$. 
Similarly, $L_2(0, \infty)$ is the space of square integrable functions on $[0, \infty)$. The binomial coefficients are defined by $\left( \begin{smallmatrix} k \\ l \end{smallmatrix} \right) = \frac{k!}{l! (k-l)!}$. 

\section{Problem Statement}

\begin{figure}
	\centering
	\includegraphics[width=6cm]{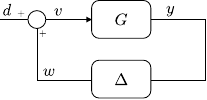}
	\caption{Block diagram describing the interconnected system \eqref{eq:problem}-\eqref{eq:uncertainty}.}
	\label{fig:problem}
\end{figure}
The studied system is presented in Fig.~\ref{fig:problem} and described by the following equations as inspired by the classical IQC framework \citep{megretski1997system}:
\begin{equation} \label{eq:problem}
	\left\{
	\begin{array}{l}
		v = w + d,\ \ (y,w) \in \Delta, \quad \quad  \quad \quad \quad \quad \quad \quad \quad \\
		y = G v.
	\end{array}
	\right.
\end{equation}
Here, $\Delta$ is a relation defined afterwards and $G$ is the linear time-invariant system whose representation is
\begin{equation} \label{eq:ODE}
	\left\{ \begin{array}{l}
		\dot{X} = A X + B v, \quad \quad X(0) = 0_{n,1}, \\
		y = C X + D v,
	\end{array} \right.
\end{equation}
with state, input and output dimensions $n$, $r$ and $p$ respectively; $A$, $B$, $C$ and $D$ are real matrices of appropriate dimensions. 

\begin{remark} Note that we study the interconnection between an LTI system and a relation, similarly to what is done in the behavioral approach to system theory \citep{willems2013introduction}. We use a relation since, in our situation, there is no clear way to express $\Delta$ as a causal input-output map.
%and the concept of causality does not extend to this mathematical object.
This is different from the feedback configuration as studied by Megretski and al. (\citeyear{megretski1997system}). \end{remark}

%$\Delta$ is a relation which is defined through boundary conditions for the following infinite-dimensional system
Before introducing $\Delta$, let us define the following infinite-dimensional system
with state $z(x,t) \in \mathbb{R}^l$ for $t \geq 0$:
\begin{equation} \label{eq:uncertainty}
	 z_t(x,t) = \operator z(x,t) = \sum_{k=0}^{m} F_k \partial_{x}^{(k)} z(x,t), \text{ for } x \in [0, 1].
\end{equation}
The initial state is $z(\cdot, 0) = 0_{l,1}$. Here $m \in \mathbb{N}$, $m > 0$, $F_{m} \neq 0$ and the domain of the operator $\operator$ is parameterized by $y \in \R^p$ such that
\begin{equation} \label{eq:domainParam}
	\domain_{y}(\operator) = \left\{ z \in H^m(0, 1)\ |\  K_{\boundary} \boundary_{m}(z) = K y\right\}
\end{equation}
where $K_{\boundary} \in \mathbb{R}^{m l \times 2 m l}$ has full-rank, $K \in \mathbb{R}^{m l \times p}$, and
\[
	%\boundary_{m}(z) = \col \left( z(0), \ z(1), \dots, \vphantom{\partial_x^{(m)}} 		\hfill \partial_x^{(m)}z(0), \ \partial_x^{(m)} z(1) \right).
	\boundary_{m}(z) = \col \left( z(0), \ z(1), \dots, \partial_x^{(m-1)}z(0), \ \partial_x^{(m-1)} z(1) \right).
\]
Note that $\boundary_{m}$ is similar to the trace operator
%as defined, e.g., by \cite{brezis2010functional} 
and maps the input function to its values and those of the derivatives at the two boundary points. Such an operator has already been introduced by \cite{AHMADI2016163} for instance. The equality $K_{\boundary} \boundary_{m}(z) = K y$ reflects the boundary conditions.

$\Delta$ is the relation that is implicitly defined through the boundary conditions of \eqref{eq:uncertainty}. We say that all pairs $(y,w) \in L_{2e}^2$ belong to $\Delta$ if there exists a solution $z \in C^1([0, \infty], H^m(0,1))$ of \eqref{eq:uncertainty} with
%as being implicitly defined by all pairs $(y,w)$ of $L_{2e}$ functions for which there exists a solution $z$ of (\ref{eq:uncertainty}) with
\begin{equation} \label{eq:implicit}
	\quad z(.,t)\in\domain_{y(t)}(\operator),\quad w(t)=L\boundary_{m}(z(.,t))
\end{equation}
for all $t \geq 0$, where $L \in \mathbb{R}^{r \times 2 m l}$.

\begin{remark} Wellposedness is assumed in this paper, as it is usually the case in IQC theory (Megretski et al., \citeyear{megretski1997system}) and examples show that $\Delta$ is often not empty. Theorem~3.1.7 in combination with Theorem~3.3.4 from \cite{curtain1995introduction} with $KD = 0$ and $d \in L_{2e}$ can be used to obtain general sufficient conditions for $\Delta$ to be non-empty. \end{remark}

To see that the proposed framework is versatile, the following examples describe how a time-delay system and a coupled ODE/heat equation can be transformed into \eqref{eq:problem}.

\begin{example} \label{ex:step1} We consider the time-delay system
\[
	\left\{ \begin{array}{ll}
		\dot{X}(t) = AX(t) + B (X(t-h) + d(t)), \quad & t > 0, \\
		y(t) = CX(t) + DX(t-h), & t \geq 0, \\
%		X(t) = 0_{n,1}, & t \in [-h, 0],
	\end{array} \right.
\]
with $A \in \mathbb{R}^{n \times n}, \ B \in \mathbb{R}^{n \times n}, C = I_n, D = 0_{n, n}$ and $h > 0$. This system has been widely studied by \cite{norm,5687820} for instance.
%, and the existence of a solution in $X \in C^1([0,\infty),\R^n)$ is ensured as long as $h > 0$.

This system can be converted into an ODE that is coupled to a transport equation with speed $\rho = h^{-1}$. The PDE is written as
\begin{equation} \label{eq:transport}
	z_t (x,t) = - \rho z_x (x,t), \quad x \in (0, 1), \quad t > 0
\end{equation}
with $w(t) = z(1,t) = L \boundary_1(z(t))$ and 
\[
	\domain(\operator) = \left\{ z \in H^1(0, 1) \ | \ z(0,t) = y(t) \right\}.
\]
In our framework we have $m = 1, F_0 = 0, F_1 = -\rho I_n$ with $\rho = h^{-1} > 0$ and $l = n$. The boundary conditions are described with $K = I_n$ and $K_{\boundary} = \left[ \begin{matrix} I_n & 0_n \end{matrix} \right]$, while the signal $w$ is expressed with $L = \left[ \begin{matrix} 0_n & I_n \end{matrix} \right]$.
 \end{example}
 
 \begin{example} \label{ex:step1Heat} Consider now the interconnection studied in \cite{BAUDOUIN2019195} between \eqref{eq:ODE} and a heat equation
 \[
 	z_t(x, t) = \gamma z_{xx}(x,t) 
 \]
 with $z(0,t) = C X(t)$, $u_x(1,t) = 0$ for $x \in (0, 1)$, $t \geq 0$. It can be written in the proposed framework by considering $m = 2$, $F_0 = 0$, $F_1 = 0$, $F_2 = \gamma$. The boundary conditions are such that $K_{\boundary} = \left[ \begin{smallmatrix} I & 0 & 0 & 0 \\ 0 & 0 & 0 & I \end{smallmatrix} \right]$ and $K = \left[ \begin{smallmatrix} I & 0 \end{smallmatrix} \right]^{\top}$. The signal $w$ is generated considering $L = [ 0 \ I ]$.
 \end{example}

\begin{remark} Note that the system~\eqref{eq:uncertainty} is a generic PDE. 
%For $m = 2$, it can also model a wave equation, and it would be interesting to extend our result to this case as well. 
Other physical models fitting the framework 
%and involving higher values of $m$ 
are, for instance, the linear KdV equation \citep{miles1981korteweg} with $m=3$ or the beam equation with $m=4$ \citep{gupta1988existence}.
\end{remark}

We will study the stability of \eqref{eq:problem} according to the following definition, which is classical in IQC theory. %\citep{megretski1997system}
%\begin{de} Let $\| \cdot \|_{\domain(\operator)}$ be a semi-norm on $\domain(\operator)$ and define the following semi-norm on $\mathcal{H}$:
%\[
%	\forall (X, z) \in \mathcal{H}, \quad \|(X,z)\|_{\mathcal{H}}^2 = \| X \|_{\R^n}^2 + \|z\|_{\domain(\operator)}^2.
%\]
%\end{de}
\begin{de}
System~\eqref{eq:problem} is \textbf{stable} if there exists $\gamma$ such that the following inequality holds for all its trajectories:
\begin{equation} \label{eq:stable}
	%\forall T > 0, \quad \int_0^T \!\! \| \left( X(t), z(\cdot,t) \right) \|_{\mathcal{H}}^2 dt \leq \gamma \int_0^T \!\! \| d(t) \|_{\R^p}^2 dt.
	\forall T \geq 0, \quad \int_0^T \!\! \| y(s) \|_{\R^n}^2 ds \leq \gamma^2 \int_0^T \!\! \| d(s) \|_{\R^p}^2 ds.
\end{equation}
\end{de}

Our goal is to obtain computational tests ensuring that system~\eqref{eq:problem} is stable.

\section{Robust IQC Theorem on Signals}

An Integral Quadratic Constraint on $y, \boundary_m(z) \in L_2(0, \infty)$ is expressed as:
\begin{equation} \label{eq:IQC}
	\int_{-\infty}^{+\infty} \begin{bmatrix} \hat{y}(j \omega) \\ \widehat{\boundary_m(z)}(j \omega) \end{bmatrix}^{\top} \Pi(j \omega) \begin{bmatrix} \hat{y}(j \omega) \\ \widehat{\boundary_m(z)}(j \omega) \end{bmatrix} d\omega \geq 0.
\end{equation}
Here, the Fourier transforms are denoted by a $\hat{\cdot}$. $\Pi$ is a frequency-dependent Hermitian matrix of appropriate dimension and called the multiplier in the sequel. We assume that $\Pi$ can be written in the form
\begin{equation} \label{eq:factorization}
	\Pi = \Psi^* M \Psi
\end{equation}
where $\Psi$ is a transfer matrix with poles in the open left-half plane, $\Psi^*(s) = \Psi(-s)^{\top}$ and $M = M^{\top}$. If \eqref{eq:IQC} holds for $\Pi_1 = \Psi^* M_1 \Psi$ and $\Pi_2 = \Psi^* M_2 \Psi$, then it is also valid for $\Pi_1 + \Pi_2 = \Psi^*(M_1 + M_2)\Psi$. This multiplier representation is classical and many classes of multipliers admit such a description \citep{megretski1997system,veenman2016robust}. Using Parseval's theorem and \eqref{eq:factorization}, a time-domain version of \eqref{eq:IQC}, called a soft IQC, with \eqref{eq:factorization} can be deduced:
\begin{equation} \label{eq:softIQC}
	\int_0^{+\infty} \psi(t)^{\top} M \psi(t) dt \geq 0.
\end{equation}
Here $\psi$ is the output of the system defined by $\Psi$ with input $y, \boundary_m(z) \in L_2(0, \infty)$ associated to a state-space realization:
\[
	\Psi = \left\llbracket \begin{array}{c|cc} A_{\Psi} & B_{y} & B_{\boundary} \\ \hline C_{\Psi} & D_{y} & D_{\boundary} \end{array} \right\rrbracket.
\]
Therefore, $\Psi$ has the following time-domain representation
\begin{equation} \label{eq:filter}
	\begin{array}{l}
		\dot{\xi} \hspace{0.09cm}= A_{\Psi} \xi + B_{y} y + B_{\boundary} \boundary_m(z), \quad \quad \xi(0) = 0, \\
		\psi = C_{\Psi} \xi + D_{y} y + D_{\boundary} \boundary_m(z),
	\end{array}
\end{equation}
where $\xi(t) \in \R^{n_{\xi}}$, $\psi(t) \in \mathbb{R}^{p_{\psi}}$ and $A_{\Psi}$, $B_y$, $B_{\boundary}$, $C_{\Psi}$, $D_{y}$ and $D_{\boundary}$ are of appropriate dimensions. %This filter is driven by
%\[
%	u = \col \left( y, w, \boundary_{m}(z) \right) = \col \left(y, v \right).
%\]
%The input $u$ is the concatenation of the input to the PDE with output $v$, meaning that $r_{\Psi} = p + r + 2 m l$.
This filter can be seen as an augmented system providing new signals, which are of interest when finding suitable multipliers $M$ in order to get valid IQCs.

Notice that \eqref{eq:softIQC} is an infinite horizon IQC. Considering only truncations of the signal $\psi$ to $[0, T]$ for $T > 0$, a revised notion, called finite horizon IQC has been proposed in \cite{megretski1997system}. This last notion has been enhanced in \cite{scherer2018stability} introducing a terminal cost depending on the final value of the filter. This notion, useful for our purpose, is recalled here.
\begin{de} For the trajectories of \eqref{eq:uncertainty}, \eqref{eq:domainParam} and \eqref{eq:filter} with any $y, \boundary_m(z) \in L_{2e}$,
% = \col \left( y, \ w, \ \boundary_m(z) \right)$
%and possibly non-zero initial conditions,
$M = M^{\top}$ is a multiplier for \textbf{a finite horizon IQC with terminal cost} defined by the matrix $Z = Z^{\top}$ if the following holds for all $T \geq 0$:
\begin{equation} \label{eq:finiteHorizonIQC}
	\int_0^T \psi^{\top}(t) M \psi(t) dt + \xi^{\top}(T) Z \xi(T) \geq 0.
	%\varepsilon \| z(T,\cdot) \|_{\domain(\operator)}^2.
\end{equation}
We then say that $M$ defines a finite horizon IQC with terminal cost $Z$ with respect to the filter $\Psi$.
\end{de}
%This kind of IQCs are not coming from standard dissipation inequalities. In general, $M$ is not signed and represents the energy transfers inside the uncertainty while the terminal cost is a lower bound of the remaining energy inside the filter at time $T$. The right-hand term is the initial energy in the PDE. This inequality can then be seen as the energy balance of the system at a given time $T$.
Notice that as we are focusing on signals in $L_{2e}$ in the previous definition and not in $L_2$ then the filter $\Psi$ can be unstable. In the sequel, and similarly to what was done by \cite{scherer2018stability}, we introduce the following realization:
\begin{equation} \label{eq:newFilter}
	\Psi \left( \begin{matrix} GL \\ I_{2(m+1)l} \end{matrix} \right) = \left\llbracket \begin{array}{cc|c} A_{\Psi} & B_y C & B_y D L + B_{\boundary} \\ 0 & A & BL \\ \hline C_{\Psi} & D_{y} C & D_y D L + D_{\boundary} \end{array} \right\rrbracket = \left\llbracket \begin{array}{c|c} \A & \B \\ \hline \C & \D \end{array} \right\rrbracket.
\end{equation}

%The following realizations are introduced:
%\begin{equation}
%	F = \left( \begin{matrix} G & 0 \\ I & 0 \\ 0 & I \end{matrix} \right) = \left\llbracket \begin{array}{c|c} A & B_F \\ \hline C_F & D_F \end{array} \right\rrbracket,
%\end{equation}
%\begin{equation} \label{eq:newFilter}
%	\Psi F = \left\llbracket \begin{array}{cc|c} A_{\Psi} & B_{\Psi} C_F & B_{\Psi} D_F \\ 0 & A & B_F \\ \hline C_{\Psi} & D_{\Psi} C_F & D_{\Psi} D_F \end{array} \right\rrbracket = \left\llbracket \begin{array}{c|c} \A & \B \\ \hline \C & \D \end{array} \right\rrbracket,
%\end{equation}
%where $B_F = [B  0]$, $C_F = \col \left( C, \ 0 \right)$ and $D_F = \left[ \begin{smallmatrix} D & 0 \\ I & 0 \\ 0 & I \end{smallmatrix} \right]$.
%Note then that $\col \left( \hat{y}, \ \hat{v} \right) = F \hat{v}$ and $\hat{\psi} = \Psi F \hat{v}$.

All these definitions lead to the following theorem which is an adapted version of Theorem~3 in Scherer et al. (\citeyear{scherer2018stability}).

\begin{theo} \label{theo:IQC} Let $P \in \mathbb{S}^{n_{\xi} + n}$ and a multiplier $M = M^{\top}$ be such that the finite horizon IQC with terminal cost $Z \in \mathbb{S}^{n_\xi}$ (defined in \eqref{eq:finiteHorizonIQC} with respect to the factorization \eqref{eq:factorization}) holds together with
\begin{equation} \label{eq:kernelKYP}
	{K_{\circ}^{\perp}}^{\top} \L \left( P, M, \left[ \begin{smallmatrix} \A & \B \\ \C & \D \end{smallmatrix} \right] \right) K_{\circ}^{\perp} \prec 0,
\end{equation}
\begin{equation} \label{eq:positivity}
	P - \left[ {\scriptsize \begin{array}{c|c} Z & 0 \\ \hline 0 & 0_n \end{array} } \right] \succ 0,
\end{equation}
where
\[
	K_{\circ} = \left[ \begin{matrix} 0_{ml,n_{\xi}} & K C & - K_{\boundary}
	%K = \left[ \begin{matrix} 0_{ml,n_{\xi}} & K C & K D L - K_{\boundary}
	%\\ 0_{r,n_{\xi}} & 0_{r,n} & -I_r & L
	\end{matrix} \right].
\]
Then system~\eqref{eq:problem} is stable in the sense of Definition~1.
\end{theo}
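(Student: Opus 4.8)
The plan is to set up a dissipation (storage-function) argument on the augmented realization \eqref{eq:newFilter}. Write $\chi = \col(\xi, X)$ for the joint state of the filter \eqref{eq:filter} and the plant \eqref{eq:ODE}, and $u = \boundary_m(z)$ for their common input; note that $\chi(0) = 0$ since $\xi(0) = 0$ and $X(0) = 0$. I would take the storage function $V = \chi^{\top} P \chi$ and differentiate it along the trajectories, substituting $v = w + d = L u + d$. Using the block matrices $\left[ \begin{smallmatrix} \A & \B \\ \C & \D \end{smallmatrix} \right]$ of \eqref{eq:newFilter} together with $P = P^{\top}$ and $M = M^{\top}$, a routine KYP-type expansion gives the identity
\[
	\frac{d}{dt} V + \psi^{\top} M \psi = \col(\chi, u)^{\top} \L\left( P, M, \left[ \begin{smallmatrix} \A & \B \\ \C & \D \end{smallmatrix} \right] \right) \col(\chi, u) + q(\chi, u, d),
\]
where $\psi$ is the filter output and $q$ gathers the cross terms carrying the disturbance $d$, which are bilinear in $(\chi,u)\times d$ plus quadratic in $d$.

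The decisive step is to use the boundary conditions to confine $\col(\chi,u)$ to a subspace. By the domain parametrization \eqref{eq:domainParam}, every admissible trajectory obeys $K_{\boundary} \boundary_m(z) = K y$; since $y = C X + D v$ and $KD = 0$, this collapses to $K C X - K_{\boundary} \boundary_m(z) = 0$, i.e. precisely $K_{\circ}\col(\chi, u) = 0$. Hence $\col(\chi,u) = K_{\circ}^{\perp}\eta$ lies in the range of $K_{\circ}^{\perp}$ for all $t$, and \eqref{eq:kernelKYP} forces the first term on the right to satisfy $\col(\chi,u)^{\top}\L(\cdots)\col(\chi,u) \leq -\epsilon \|\col(\chi,u)\|^2$ for some $\epsilon > 0$; the quadratic form is strictly dissipative on the constraint subspace.

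It then remains to absorb the disturbance and the terminal term. Since \eqref{eq:kernelKYP} is strict, the margin $-\epsilon\|\col(\chi,u)\|^2$ dominates the cross terms $q$ after a completion of squares, giving $\frac{d}{dt}V + \psi^{\top}M\psi \leq -\frac{\epsilon}{2}\|\col(\chi,u)\|^2 + c\|d\|^2$. Integrating over $[0,T]$ with $\chi(0)=0$, and replacing $\int_0^T \psi^{\top}M\psi\,dt$ by its lower bound $-\xi^{\top}(T)Z\xi(T)$ from the finite-horizon IQC \eqref{eq:finiteHorizonIQC}, I obtain
\[
	\chi(T)^{\top}\left( P - \diag(Z, 0_n) \right)\chi(T) \leq -\frac{\epsilon}{2}\int_0^T \|\col(\chi,u)\|^2\,dt + c\int_0^T \|d\|^2\,dt.
\]
The left-hand side is nonnegative by the positivity condition \eqref{eq:positivity}, so $\int_0^T\|\col(\chi,u)\|^2\,dt \leq \frac{2c}{\epsilon}\int_0^T\|d\|^2\,dt$. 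Finally, bounding the plant output by $\|y\|^2 = \|C X + D L u + D d\|^2 \lesssim \|\col(\chi,u)\|^2 + \|d\|^2$ and integrating yields $\int_0^T\|y\|^2\,dt \leq \gamma^2\int_0^T\|d\|^2\,dt$ for a suitable $\gamma$, which is exactly stability in the sense of Definition~1.

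I expect the main obstacle to be the careful handling of the terminal cost together with a possibly unstable filter $\Psi$: condition \eqref{eq:positivity} is tailored precisely so that $\chi(T)^{\top}(P - \diag(Z,0_n))\chi(T) \geq 0$ cancels the indefinite terminal contribution $\xi^{\top}(T)Z\xi(T)$ coming from \eqref{eq:finiteHorizonIQC}; without it an unstable $\Psi$ would leave an uncontrolled boundary term. A secondary delicate point is justifying that the boundary relation yields exactly $K_{\circ}\col(\chi,u)=0$, which relies on $KD = 0$ (as noted in the wellposedness remark and met in the examples); the remaining estimates, including the completion of squares that extracts the finite gain $\gamma$, are routine.
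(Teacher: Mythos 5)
Your proposal is correct, and its skeleton is the same as the paper's: a dissipation argument with storage $V=\col(\xi,X)^{\top}P\col(\xi,X)$, restriction of the trajectory to the boundary-condition subspace $\ker K_{\circ}$, integration over $[0,T]$ with zero initial state, replacement of $\int_0^T\psi^{\top}M\psi\,dt$ by the terminal bound $-\xi^{\top}(T)Z\xi(T)$ from \eqref{eq:finiteHorizonIQC}, and cancellation of the indefinite terminal term via \eqref{eq:positivity}. The one place you genuinely diverge is in how the disturbance channel and the $L_2$-gain are extracted. The paper augments the multiplier to $\tilde{M}_{\gamma}=\diag(M,\tfrac{1}{\gamma}I_{p},-\gamma I_{r})$, forms the extended quadratic form $\tilde{\L}_{\gamma}$ on the full trajectory vector $\eta=\col(\xi,X,\boundary_m(z),d)$ (with $d$ left unconstrained, since $\tilde{K}_{\circ}=[\,K_{\circ}\ 0\,]$), and applies Finsler's lemma twice: \eqref{eq:kernelKYP} gives $\L-\sigma K_{\circ}^{\top}K_{\circ}\prec 0$, and because $\gamma$ enters $\tilde{\L}_{\gamma}$ only through $-\gamma I$ on the diagonal and a $\tfrac{1}{\gamma}\Theta$ perturbation, a sufficiently large $\gamma$ yields $(\tilde{K}_{\circ}^{\perp})^{\top}\tilde{\L}_{\gamma}\tilde{K}_{\circ}^{\perp}\prec 0$; the resulting pointwise inequality already contains $\tfrac{1}{\gamma}\|y\|^2-\gamma\|d\|^2$, so stability drops out immediately after integration. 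You instead extract a strict margin $-\epsilon\|\col(\chi,u)\|^2$ on the constrained subspace (valid since $K_{\circ}^{\perp}$ has full column rank), absorb the $d$-cross terms by completion of squares, deduce an $L_2$ bound on $\col(\chi,u)$, and bound $y$ a posteriori through $y=CX+DLu+Dd$. Both routes are sound and essentially equivalent in content; the paper's version keeps the performance channel inside the quadratic form, mirroring the formulation of Scherer et al.\ that the theorem adapts, while yours is slightly more elementary and makes the dissipativity mechanism more transparent. One point where you are in fact more careful than the paper: the subspace identity $K_{\circ}\col(\xi,X,\boundary_m(z))=0$ needs $KDv=0$, since \eqref{eq:domainParam} gives $K_{\boundary}\boundary_m(z)=Ky=KCX+KDv$; the paper asserts $\tilde{K}_{\circ}\eta=0$ without comment, whereas you correctly flag the implicit standing hypothesis $KD=0$ (mentioned in the paper only in the wellposedness remark, and satisfied in both of its examples).
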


The main advantage of this theorem lies in the simplicity of its proof.

% and it makes use of the Elimination Lemma reminded thereafter.
%\begin{lemma}[Elimination Lemma \cite{Svariable}] \label{sec:finsler}
%	For any $Q_1 = Q_1^{\top}$ and $Q_2$ of appropriate dimension, the following statements are equivalent:
%	\begin{enumerate}	
%		\item $\eta^{\top} Q_1 \eta < 0$ for all $\eta \text{ such that } Q_2 \eta = 0$,
%		\item ${Q_2^{\perp}}^{\top} Q_1 Q_2^{\perp} \prec 0$.
%	\end{enumerate}
%\end{lemma}

\begin{proof} To ease the readability, the sketch of proof is divided into two parts. First, we show that if \eqref{eq:kernelKYP} holds, then an extended LMI also holds. The second part aims at deriving \eqref{eq:stable} from this larger LMI; the calculations are highly inspired by the ones in \cite{scherer2018stability}.

\textit{Step 1:} Take first a trajectory $\eta = \col(\xi, X, \boundary_m, d)$ of \eqref{eq:ODE} and \eqref{eq:uncertainty} together with \eqref{eq:implicit}. We can then introduce the dynamical system
\begin{equation} \label{eq:system2}
	\left\{
	\begin{array}{cl}
		\frac{d}{dt} \left[ \begin{smallmatrix} \xi \\ X \end{smallmatrix} \right] \!\!\!\!&= \A \left[ \begin{smallmatrix} \xi \\ X \end{smallmatrix} \right] + \tilde{\B} \left[ \begin{smallmatrix} \boundary_m(z) \\ d \end{smallmatrix} \right], \quad \quad \left[ \begin{smallmatrix} \xi(0) \\ X(0) \end{smallmatrix} \right] = 0, \\
		\left[ \begin{smallmatrix} \psi \\ y \\ d \end{smallmatrix} \right] \!\!\!\!&= \tilde{\C} \left[ \begin{smallmatrix} \xi \\ X \end{smallmatrix} \right] + \tilde{\D} \left[ \begin{smallmatrix} \boundary_m(z) \\ d \end{smallmatrix} \right],
	\end{array}
	\right.
\end{equation}
with
\[
	\begin{array}{c}
		\tilde{\B} = \left[ \begin{smallmatrix} \B & \begin{smallmatrix} B_y D \\ B \end{smallmatrix} \end{smallmatrix} \right], \quad \
		\tilde{\C} = \left[ \begin{smallmatrix} \C \\ \begin{smallmatrix} 0 & C \\ 0 & 0 \end{smallmatrix} \end{smallmatrix} \right], \quad \
		\tilde{\D} = \left[ \begin{smallmatrix}
			 \D & D_y \\
			 D L & D \\
			 0 & I_r
		\end{smallmatrix} \right].
	\end{array}
\]
Note that, for almost all $t \geq 0$, $\tilde{K}_{\circ} \eta(t) = 0$ where 
%$\tilde{K} = \left[ \begin{smallmatrix} K & K D \end{smallmatrix} \right]$. 
$\tilde{K}_{\circ} = \left[ \begin{smallmatrix} K_{\circ} & 0 \end{smallmatrix} \right]$. 
Let $\tilde{M}_{\gamma} = \diag(M, \frac{1}{\gamma} I_{p}, -\gamma I_r)$ and define the following matrices:
\[
\L = \L \left( P, M, \left[ \begin{smallmatrix} \A & \B \\ \C & \D \end{smallmatrix} \right] \right) \text{ and } \tilde{\L}_{\gamma} = \L \left( P, \tilde{M}_{\gamma}, \left[ \begin{smallmatrix} \A & \tilde{\B} \\ \tilde{\C} & \tilde{\D} \end{smallmatrix} \right] \right).
\]
%where $P = \left[ \begin{smallmatrix} P_1 & P_2 \\ P_2^{\top} & P_3 \end{smallmatrix} \right]$ with $P_1 \in \mathbb{S}^n, P_2 \in \mathbb{R}^{n \times n_{\xi}}$ and $P_3 \in \mathbb{S}^{n_{\xi}}$.
After some calculations, the following equality is obtained:
\[
	\tilde{\L}_{\gamma} = \left[ \begin{matrix} \L & \Phi(P, \B, \C, M, B_y, D_y) \\ \star & D_y^{\top} M D_y - \gamma I \end{matrix} \right] + \frac{1}{\gamma} \Theta (C, D, L),
\]
where $\Phi$ and $\Theta$ are matrices of appropriate dimension that are not depending on $\gamma$.
Assume now that the conditions of Theorem~\ref{theo:IQC} hold. Then, by Finsler's lemma \citep{finsler}, there exists $\sigma > 0$ such that:
\[
	\L - \sigma K_{\circ}^{\top} K_{\circ} \prec 0.
\]
Consequently, since $\Phi$ and $\Theta$ do not depend on $\gamma$, there exists $\gamma > 0$ sufficiently large such that we get $\tilde{\L}_{\gamma} - \sigma \tilde{K}_{\circ}^{\top} \tilde{K}_{\circ} \prec 0$. Using Finsler's lemma again yields:
\begin{equation} \label{eq:kernelKYP2}
	\tilde{K}_{\circ}{{}^{\perp}}^{\top} \tilde{\L}_{\gamma} \tilde{K}_{\circ}^{\perp} \prec 0.
\end{equation}

\textit{Step 2:} %Let $\eta$ be a signal with $\tilde{K} \eta(t) = 0$ for almost all $t \geq 0$. %As $\tilde{K}$ is full-rank,
%We want to show here that \eqref{} implies that $\tilde{M} = \diag(M, \frac{1}{\gamma} I_{p}, -\gamma I_p)$ defines a finite horizon IQC with terminal cost $Z$ for.
Let us define $\chi(t) = \left( \tilde{K}_{\circ}^{\perp} \right)^+ \eta(t)$ for almost all $t \geq 0$, where $\left( \tilde{K}_{\circ}^{\perp} \right)^+$ is a right-inverse of $\tilde{K}_{\circ}^{\perp}$.
Inequality \eqref{eq:kernelKYP2} can be written as
\begin{equation} \label{eq:L}
	\chi^{\top}(t) \tilde{K}_{\circ}{{}^{\perp}}^{\top} \tilde{\L}_{\gamma} \tilde{K}_{\circ}^{\perp} \chi(t) = \eta^{\top}(t) \tilde{\L}_{\gamma} \eta(t) \leq 0.
\end{equation}
%Take a trajectory of \eqref{eq:problem} such that $\eta = \col(\xi, X, \boundary_m, d)$, then we get $\tilde{K} \eta(t) = 0$ for almost all $t \geq 0$. Realization \eqref{eq:newFilter} implies that:
%\[
%	\begin{array}{rl}
%		\left[ \begin{smallmatrix} I & 0 \\ \A & \tilde{\B} \\ \tilde{\C} & \tilde{\D} \end{smallmatrix} \right] \eta \!\!\!\!&= \!\col \left( \xi, X, \dot{\xi}, \dot{X}, \psi, y, d \right).
%	\end{array}
%\]
Expanding \eqref{eq:L} and using \eqref{eq:system2} yields for almost all $t \geq 0$:
 \begin{multline*}
 	\frac{d}{dt} \left( \left[ \begin{smallmatrix} \xi(t) \\ X(t) \end{smallmatrix} \right]^{\top} P \left[ \begin{smallmatrix} \xi(t) \\ X(t) \end{smallmatrix} \right] \right) + \psi^{\top}(t) M \psi(t) \\
 	+ \frac{1}{\gamma} \| y(t) \|_{\R^p}^2 - \gamma \| d(t) \|_{\R^p}^2 \leq 0.
 \end{multline*}
An integration between $0$ and $T \geq 0$ leads to
\begin{multline*}
 	\left[ \begin{smallmatrix} \xi(T) \\ X(T) \end{smallmatrix} \right]^{\top} P \left[ \begin{smallmatrix} \xi(T) \\ X(T) \end{smallmatrix} \right] + \int_0^T \psi^{\top}(t) M \psi(t) dt \\
 	+ \int_0^T \frac{1}{\gamma} \| y(t) \|_{\R^p}^2 - \gamma \| d(t) \|_{\R^p}^2 dt \leq 0.
\end{multline*}
Using IQC~\eqref{eq:finiteHorizonIQC}, we get for all $T \geq 0$:
\begin{multline*}
	\left[ \begin{smallmatrix} \xi(T) \\ X(T) \end{smallmatrix} \right]^{\top} P \left[ \begin{smallmatrix} \xi(T) \\ X(T) \end{smallmatrix} \right] - \xi^{\top}(T) Z \xi(T) \\
 	+ \int_0^T \frac{1}{\gamma} \| y(t) \|_{\R^p}^2 - \gamma \| d(t) \|_{\R^p}^2 dt \leq 0.
\end{multline*}
Since \eqref{eq:positivity} holds, we conclude that system~\eqref{eq:problem} is stable.
\end{proof}

\begin{remark} Note that there are some differences between the previous theorem and Theorem~1 derived by Megretski et al. (\citeyear{megretski1997system}). For instance, we neither require the signals to be in $L_2(0, \infty)$ nor use the causality of the relation.\\
The main advantage of the current formulation is that it is more general than in \cite{megretski1997system} and it is possible to recover existing results of the IQC theory, as shown in \cite{scherer2018stability}.
%For instance, in the case $m = 0$ (and $K^{\perp} = I$), we get Theorem~3 of \cite{scherer2018stability}.
\end{remark}

 %\begin{remark} Theorem~\ref{theo:IQC} does not provide the \textit{wellposedness} in the sense of PDEs for the interconnected system. This can be done using a range condition (as in \cite{barreau2018lyapunov}) or using a Galerkin-like methodology as in \cite{bastin2016stability,BAUDOUIN2019195}. \end{remark}

%\begin{remark}Note that it does not provide the \textit{asymptotic} stability of $(X, z)$ but only its simple stability. \end{remark}

%\begin{remark} The Kalman-Yakubovich-Popov (KYP) lemma as stated in Lemma~2.11 from \cite{scherer2000linear} has a very similar form than \eqref{eq:kernelKYP}. Classical IQC's theorem (as in \cite{megretski1997system}) usually requires the use of the KYP lemma to get numerically tractable stability conditions. Here, the proof of Theorem~3 from \cite{scherer2018stability} does not require the application of the KYP lemma but uses a dissipativy-based argument. Moreover, it describes an ellipsoidal time-domain constraints for the trajectories. For more information the reader can refer to \cite{scherer2018stability}. \end{remark}

\section{The Projection Methodology}
\label{sec:filter}

In this paper, we want to design a filter $\Psi$ which is well-suited for the analysis of infinite dimensional uncertainties such as \eqref{eq:uncertainty}.
%The filter should derive the projections coefficients of $z$ from its boundary values.
We will use the projection methodology, an idea firstly introduced for time-delay systems in \cite{seuret:hal-01065142}. It can be extended to coupled ODE/PDE problems as explained below.

%Many filters $\Psi$ can be designed in order to guarantee \eqref{eq:finiteHorizonIQC}. This article introduces a filter which was firstly developed for time-delay systems in \cite{seuret:hal-01065142}. The proposed methodology can be easily extended to coupled ODE/PDE problems as explained below.

%\subsection{Construction of Filter~\eqref{eq:filter}}

Let $N \geq 0$ and denote by $\left(e_k\right)_{k \leq N}$ an orthogonal family of $L_2(0, 1)$. The projection coefficients of $z(\cdot, t)$ are then defined for $k \in  \N$ as
\[
	\Omega_k(t) = \int_0^1 z(x,t) e_k(x) dx.
\]
$\Omega_k$ is called the projection coefficient of order $k$ of $z$. The filter should then generate the projections coefficients of $z(\cdot, t)$ from its boundary values. Notice that the signals $\Omega_k$ can be generated by a marginally stable linear system driven by $\boundary_{m}(z)$. Indeed, for $m = 2$, the differentiation of $\Omega_k$ with respect to time leads to:
\[
	\begin{array}{ll}
		\!\!\dot{\Omega}_k(t) \!&\displaystyle=\!\int_0^1\! z_t(x,t) e_k(x) dx = \sum_{i=0}^2 F_i \! \int_0^1 \!\!\partial_x^{(i)} z(x,t) e_k(x) dx.
	\end{array}
\]
\[
    \begin{array}{ll}
		\!\!\dot{\Omega}_k(t) \!&\displaystyle = F_2 \left( \left[ z_x(x,t) e_k(x) \right]_0^1 - \left[ z(x,t) \frac{d}{dx} e_k(x) \right]_0^1 \right. \\
		&\displaystyle \hfill \left. + \int_0^1 z \frac{d^2}{dx^2}e_k(x) dx \right) \\
		&\displaystyle \hspace*{-0.75cm} + F_1 \left( \left[ z(x,t) e_k(x) \right]_0^1 -\! \int_0^1 \!\! z(x,t) \frac{d~\!e_k}{dx} (x) dx \right) + F_0 \Omega_k(t).
	\end{array}
\]
If we choose $\left\{ e_k \right\}_{k \in \mathbb{N}}$ as a polynomial orthogonal family of $L_2(0,1)$, $\frac{d}{dx}e_k$ is a linear combination of strictly lower order polynomials in the same family. The only polynomial basis of $L_2(0,1)$ which has values at the boundaries are the Legendre polynomials $\{ \mathcal{L}_k \}_{k \in \mathbb{N}}$. This family has the following properties \citep{courant1966courant}:
\[
	\hspace{-0.15cm}
	\begin{array}{l}
		\displaystyle \mathcal{L}_k(x) = (-1)^k \sum_{l = 0}^k (-1)^l \left( \begin{smallmatrix} k \\ l \end{smallmatrix} \right) \left( \begin{smallmatrix} k+l \\ l \end{smallmatrix} \right) x^l, \\
		\displaystyle \mathcal{L}_k(1) = 1, \quad \mathcal{L}_k(0) = (-1)^k, \quad \| \mathcal{L}_k \|_{L_2}^2 = (2k+1)^{-1}, \\
		\displaystyle \frac{d}{dx}\mathcal{L}_k(x) = \!\sum_{j=0}^k\! \ell_{kj} \mathcal{L}_j(x), \ \frac{d^2}{dx^2}\mathcal{L}_k(x) = \!\sum_{j=0}^k \sum_{i=0}^{j}\! \ell_{kj} \ell_{ji} \mathcal{L}_i(x), \\
	\end{array}
\]
where $\ell_{kj} = (2j+1)(1 - (-1)^{k+j})$ if $j \leq k$ and $\ell_{kj} = 0$ otherwise. Note that $\dot{\Omega}_k$ is a linear combination of strictly lower order projection coefficients, which provides a hierarchical structure, expressed explicitly as follows:
\[
	\begin{array}{ll}
		\dot{\Omega}_k(t) \!&= \displaystyle F_2 \left( z_x(1,t) - (-1)^k z_x(0,t) - \sum_{j=0}^k \ell_{kj} z(1,t) \ \ \ \  \right. \\
		& \displaystyle \hfill \left.+ \sum_{j=0}^k (-1)^j \ell_{kj} z(0,t) + \sum_{j=0}^k \sum_{i=0}^{j} \ell_{kj} \ell_{ji} \Omega_i(t) \right) \\
		& \displaystyle \quad + F_1 \! \left( z(1,t) - (-1)^k z(0,t) - \sum_{j=0}^k \ell_{kj} \Omega_j \! \right) \!+\! F_0 \Omega_k.  \\
	\end{array}
\]
The behaviors of the projection coefficients are highly related to the behavior of $z(\cdot, t)$ and can be generated by a linear sub-system whose inputs are the boundary conditions. It is natural to define the state of the filter by stacking the projection coefficients as in
\[
	\xi_N(t) = \col \left( \Omega_0(t), \ \Omega_1(t), \ \cdots, \ \Omega_N(t) \right)\!.
\]
Finally, the filter dynamics can be expressed by \eqref{eq:filter} in the case $m \in \{ 1, 2\}$, $n_{\xi} = (N+1)l$ and $p_{\psi} = 2ml+n_{\xi}$ with
\begin{equation} \label{eq:filter2}
    \hspace*{0.3cm}
	\begin{array}{l}
		\hspace{-0.4cm}\displaystyle A^N_{\Psi} = \sum_{i=0}^{m} (-1)^i \tilde{F}^N_i L_N^i, \quad B^N_{y} = 0_{n_{\xi}, p}, \\ \hspace{-0.4cm}\displaystyle B_{\boundary}^N = \left[ \begin{matrix} B_1 & \cdots & B_{m} \end{matrix} \right], \ B^N_i = \sum_{j=i}^m \tilde{F}_{j} (-L_N)^{j-i} \left[ \begin{matrix} - \mathbb{1}_N^* & \mathbb{1}_N  \end{matrix} \right], \quad \quad \quad \quad \\
		\hspace{-0.4cm}C^N_{\Psi} = \left[ \begin{matrix} 0_{2 (m+1) l, n_{\xi}} \\ I_{n_{\xi}} \end{matrix} \right]\!\!, \quad D_\boundary^N = \left[ \begin{matrix} I_{2(m+1)l} \\ 0_{n_{\xi},2(m+1)l} \end{matrix} \right], \ D^N_{y} = 0_{p_{\psi}, p}, \quad \displaystyle %\xi_N^0 = 0_{n_{\xi},1},
	\end{array}
	\vspace*{-0.3cm}
\end{equation}
where
\[
	\begin{array}{ll}
		L_N = [ \ell_{ij} I_l ]_{i,j=0..N} & \in \mathbb{R}^{n_{\xi} \times n_{\xi}}, \\
		\tilde{F}^N_i = \diag(F_i, \cdots, F_i) &  \in \mathbb{R}^{n_{\xi} \times n_{\xi}}, \\
		\mathbb{1}_N = \left[ \begin{matrix} I_l & I_l & \cdots & I_l \end{matrix} \right]^{\top}\!\!\!\!\! & \in \mathbb{R}^{n_{\xi} \times l}, \\
		\mathbb{1}_N^* = \left[ \begin{matrix} I_l & -I_l & \cdots & (-I_l)^{N} \end{matrix} \right]^{\top}\!\!\! \ & \in \mathbb{R}^{n_{\xi} \times l} .
	\end{array}
\]
Notice that the previous equation can be extended to higher values of $m$ with a proof based on induction.

To summarize, the filter $\Psi_N$ has the following form:
\begin{equation} \label{eq:filterN}
	\Psi_N : \begin{array}[t]{ccc}
		L_{2e}(0, \infty)^{p + 2 m l} & \to & L_{2e}(0, \infty)^{ p_{\psi}} \\
		\left[ \begin{smallmatrix} y \\ \boundary_{m}(z) \end{smallmatrix} \right] & \mapsto & \psi_N =  \left[ \begin{smallmatrix} \boundary_{m}(z) \\ \xi_N \end{smallmatrix} \right],
	\end{array}		
\end{equation}
and from the boundary values of $z$ and its derivatives, it computes the $N^{\text{th}}$ projection coefficients of $z$. Thanks to the previous calculations, using this filter in Theorem~\ref{theo:IQC} leads to the following definitions:
\[
	\begin{array}{llll}
		\!\A = \left[ \begin{matrix} A^N_{\Psi} & 0_{} \\ 0 & A \end{matrix} \right]\!, \ \ \B = \left[ \begin{matrix} B_{\boundary}^N \\ BL \end{matrix} \right]\!, \ \
		\C = \left[ \begin{matrix} C^N_{\Psi} & 0_{} \end{matrix} \right]\!, \ \ \D = D_{\boundary}^N.
	\end{array}
\]

%The notations \eqref{eq:newFilter} are modified as follows:
%\begin{equation} \label{eq:newFilter2}
%	\Psi_N F = \left\llbracket \begin{array}{cc|c} A^N_{\Psi} & 0_{} & B_{\boundary} \\ 0 & A & BL \\ \hline C^N_{\Psi} & 0_{} & D_{\boundary} \end{array} \right\rrbracket = \left\llbracket \begin{array}{c|c} \A_N & \B_N \\ \hline \C_N & \D_N \end{array} \right\rrbracket,
%\end{equation}

%The filter designed in this subsection indeed helps generating finite-horizon IQCs with terminal cost. 
Note that the filter $\Psi_N$ is depending on the dynamic of the uncertainty. Finding the multiplier $M_N$ is then a consequence of the order $N$ used for the filter and the PDE under consideration. %Theorem~\ref{theo:IQC} rewrites then as follows.
%\begin{coro} \label{coro:IQC} Let $N \geq 0$. Assume there exist $P_N \in \mathbb{S}^{(N+1)l + n}$ and a multiplier $M_N = M_N^{\top}$ such that the finite horizon IQC with terminal cost $Z_N$ (defined in \eqref{eq:finiteHorizonIQC} with respect to the factorization \eqref{eq:factorization} and filter \eqref{eq:filterN}) holds together with
%\begin{equation} \label{eq:kernelKYP3}
%	{K_N^{\perp}}^{\top} \L \left( P_N, M_N, \left[ \begin{smallmatrix} \A_N & \B_N \\ \C_N & \D_N \end{smallmatrix} \right] \right) K_N^{\perp} \prec 0,
%\end{equation}
%\begin{equation} \label{eq:positivity2}
%	P_N - \diag(Z_N, 0_n) \succ 0,
%\end{equation}
%where
%\[
%	K_N = \left[ \begin{matrix}
%			0_{ml, l(N+1)} & K_y C & K_y D L - K_{\boundary}
%		 \end{matrix} \right].
%\]
%Then system~\eqref{eq:problem} is stable.
%\end{coro}
To apply Theorem~\ref{theo:IQC}, we can follow these subsequent steps:
\begin{enumerate}
	\item First rewrite the system under consideration to fit equations~\eqref{eq:ODE} and \eqref{eq:uncertainty} (see Examples~\ref{ex:step1} and \ref{ex:step1Heat});
	\item Find a class of multiplier $M_N$ and terminal cost matrices $Z_N$ verifying \eqref{eq:finiteHorizonIQC};
	\item Test the feasibility of LMIs \eqref{eq:kernelKYP} and \eqref{eq:positivity} using the multipliers found in the previous step. If they are feasible, then the system is stable (see Section~6).
\end{enumerate}

The following section is dedicated to an example of step~2 with $m = 1$.

\section{Generating IQCs using Projections}

Proposing IQCs for infinite-dimensional systems requires different tools than in the finite-dimensional case. We introduce in the first subsection a justification of how the filter designed in the previous subsection can be helpful. Then, the second subsection is dedicated to an application to a transport equation as defined in Example~1.

\subsection{General idea}

Since $z(\cdot, t) \in H^{m}([0, 1])$, it is difficult to find dissipation inequalities. One solution is to consider an approximation of $z$ on a finite-dimensional space. Let $N \geq 0$ and denote by $\left(e_k\right)_{k \leq N}$ an orthogonal family of $L_2(0, 1)$. Then, the following holds for $t \geq 0$:
\[
	\begin{array}{rl}
		\displaystyle \min_{y \in \text{Span}(e_k)_{k \leq N}} \!\!\!\!\!\!\! \| z(\cdot,t) - y \|_{L_2}^2 &= \| z(\cdot, t) - z_N(\cdot, t) \|_{L_2}^2 \\
		&\hspace{-1.55cm}= \|z(\cdot,t)\|_{L_2}^2 - 2 \langle z(\cdot, t), z_N(\cdot, t) \rangle + \|z\|_{L_2}^2\\
		&\hspace{-1.55cm}= \| z(\cdot, t) \|_{L_2}^2 - \|z_N(\cdot, t) \|_{L_2}^2 \geq 0.
	\end{array}
\]
Here we have
\[
	z_N(\cdot,t) = \sum_{k=0}^N \Omega_k(t) \frac{e_k(\cdot)}{\| e_k \|_{L_2}^2}.
\]
The previous equality shows that $z_N$ is the projection of $z$ on the subspace spanned by the family $(e_k)_{k \leq N}$ and is consequently the optimal approximation (with respect to the norm $\| \cdot \|_{L_2}$) of $z$ in the former family. Note that for $R \in \mathbb{S}^l_+$, a consequence of the previous statement is the following:
\begin{equation} \label{eq:bessel}
	\int_0^1 z^{\top}(x, t) R z(x,t) dx \geq \sum_{k=0}^N \frac{1}{\| e_k \|_{L_2}^2 } \Omega_k^{\top}(t) R \Omega_k(t).
\end{equation}
Inequality~\eqref{eq:bessel} is referred to as Bessel Inequality and, as a consequence of Parseval's theorem, equality holds when $N$ goes to infinity. The previous inequality is the starting point for building multipliers in a similar way as with Jensen's inequality for time-delay systems \citep{norm}. These considerations indicates that $\Omega_k$ are signals of interest to characterize the uncertainty $\Delta$ with the help of the IQC defined in \eqref{eq:finiteHorizonIQC}, since the Bessel inequality relates the energies of the PDE and of a finite-dimensional system.

%This system has been widely studied in \cite{norm,5687820} for instance and the existence of a solution in $\mathcal{H}^1$ is ensured as long as $\rho > 0$.
%\begin{remark} Note that if the transport equation is written for $x \in [0, 1]$ and $t > 0$ as:
%\[
%	z_t(x,t) = \rho z_x(x,t),
%\]
%where $\rho > 0$, then one can consider $\tilde{z}(x,t) = z(1-x,t)$ and the new system can be written in terms of $\tilde{z}$ with the same form than \eqref{eq:transport}.
%\end{remark}
%\begin{remark}
%If the transport equation is of the following form for $x \in [0, 1]$ and $t > 0$:
%\[
%	z_t(x,t) = -\rho z_x(x,t) + F_0 z(x,t),
%\]
%then, considering the variable $\tilde{z}(x,t) = \exp \left(-F_0 \frac{x}{\rho} \right) z(x,t)$ leads to a system in $\tilde{z}$ with the same properties than \eqref{eq:transport}.\end{remark}
%
%\begin{remark} The previous system can be easily extended to take into account commensurate delays and neutral systems \cite{niculescu2001delay}. Then, combining these two extensions possibly lead to a coupled ODE / wave equation as in \cite{barreau:hal-01690626}. \end{remark}
%
%\begin{remark} For the problem described by \eqref{eq:problem} and \eqref{eq:transport}, there exists an exact stability test based on poles location, originally coming from \cite{olgac2002exact}. This helps us to find the exact stability chart such that a comparison of efficiency for different methodologies can be made. \end{remark}

\subsection{Application to uncertainties generated by a transport equation}

Example~\ref{ex:step1} shows how to model a time-delay system using our framework. This section is dedicated to finding multipliers for the uncertainty~\eqref{eq:transport} such that inequality~\eqref{eq:finiteHorizonIQC} holds. Two multipliers $M^1_N$ and $M^2_N$ are obtained using different techniques and an heavy use of \eqref{eq:bessel}.

\subsubsection{Finite-horizon IQC with terminal cost.}

%Considering the transport equation, it appears clearly that the energy a the end $x = 1$ is the same than the energy at $x = 0$. Consequently, a first IQC is relating the energy at the two boundaries of the transport equation and the remaining energy inside the filter at a time $T$, implying a terminal cost.
Since the transport equation is an hyperbolic equation, it is a lossless system from an energy perspective. This first IQC expresses this physical fact.

\begin{lemma} For the transport equation in \eqref{eq:transport}, if $S \succ 0$, then the finite-horizon IQC \eqref{eq:finiteHorizonIQC} with terminal cost $Z_N^1$ and multiplier $M_N^1$ holds with:
\begin{equation} \label{eq:multiplierTransport1}
	%\begin{array}{cl}
	%	M_N^1 \!\!\!\!&= \rho \left[ {\scriptsize \begin{array}{cc|c} S & 0 & 0 \\ 0 & -S & 0 \\ \hline 0 & 0 & 0_{n_{\xi}}\end{array}} \right], \\
	%	Z_N^1 \!\!\!\!&= - \diag \left( S, 3S, \dots, (2N+1)S \right).
	%\end{array}
	M_N^1 = \!\rho \left[ {\scriptsize \begin{array}{cc|c} S & 0 & 0 \\ 0 & -S & 0 \\ \hline 0 & 0 & 0_{n_{\xi}}\end{array}} \right]\!\!, Z_N^1 = - \diag \left( S, 3S, \dots, (2N+1)S \right).
\end{equation}
\end{lemma}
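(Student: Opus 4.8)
The plan is to exploit the lossless (energy-conserving) character of the transport equation: I will introduce the natural quadratic energy $V(t)=\int_0^1 z^\top(x,t) S z(x,t)\,dx$, show that the integrand $\psi_N^\top M_N^1 \psi_N$ is \emph{exactly} $\dot V(t)$, and then close the gap between the full-state energy at the terminal time and the finite-dimensional terminal cost by invoking Bessel's inequality~\eqref{eq:bessel}.

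First I would unpack the filter output. For $m=1$ the boundary operator is $\boundary_1(z)=\col(z(0),z(1))$, so by~\eqref{eq:filterN} the filter produces $\psi_N=\col\big(z(0,t),\,z(1,t),\,\xi_N(t)\big)$ with $\xi_N=\col(\Omega_0,\dots,\Omega_N)$. Because the block of $M_N^1$ acting on $\xi_N$ vanishes, the quadratic form collapses to the boundary contribution
\[
	\psi_N^\top M_N^1 \psi_N = \rho\big(z(0,t)^\top S z(0,t) - z(1,t)^\top S z(1,t)\big).
\]
Next, differentiating $V$ under the integral sign, substituting $z_t=-\rho z_x$, and integrating by parts via $\frac{d}{dx}(z^\top S z)=2z^\top S z_x$ (which uses $S=S^\top$) leaves no residual volume term and yields $\dot V(t)=\rho(z(0,t)^\top S z(0,t)-z(1,t)^\top S z(1,t))=\psi_N^\top M_N^1 \psi_N$. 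Integrating on $[0,T]$ and using the zero initial condition $z(\cdot,0)=0$, hence $V(0)=0$, gives $\int_0^T \psi_N^\top M_N^1 \psi_N\,dt = V(T)$.

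Finally, I would apply Bessel's inequality~\eqref{eq:bessel} with the Legendre basis, whose normalization $\|\mathcal{L}_k\|_{L_2}^2=(2k+1)^{-1}$ produces the weights $2k+1$:
\[
	V(T) \geq \sum_{k=0}^N (2k+1)\,\Omega_k^\top(T) S \Omega_k(T) = -\,\xi_N^\top(T) Z_N^1 \xi_N(T),
\]
the last equality being just the definition of $Z_N^1=-\diag(S,3S,\dots,(2N+1)S)$. Rearranging delivers the claim $\int_0^T \psi_N^\top M_N^1 \psi_N\,dt + \xi_N^\top(T) Z_N^1 \xi_N(T) = V(T)+\xi_N^\top(T) Z_N^1 \xi_N(T)\geq 0$.

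The argument is essentially mechanical, so the main obstacle is bookkeeping rather than depth: I must verify that the boundary terms from integration by parts reproduce $M_N^1$ exactly, with no surviving volume term — this is precisely where the losslessness of the hyperbolic equation is used — and that the Legendre weights $2k+1$ match the diagonal entries of $-Z_N^1$. The hypothesis $S\succ 0$ (i.e.\ $S\in\mathbb{S}^l_+$) enters only to license the use of~\eqref{eq:bessel}.
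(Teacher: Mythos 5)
Your proposal is correct and follows essentially the same route as the paper: both establish the energy balance $\int_0^T \psi_N^\top M_N^1 \psi_N\,dt = \|z(\cdot,T)\|_S^2$ for the energy $\mathcal{I}_1(t)=\int_0^1 z^\top(x,t) S z(x,t)\,dx$ (the paper integrates $\partial_t(z^\top S z)$ over $[0,T]\times[0,1]$ directly, you differentiate $V$ and then integrate, which is the same computation), and both then apply Bessel's inequality~\eqref{eq:bessel} with the Legendre weights $2k+1$ to bound $\|z(\cdot,T)\|_S^2$ below by $-\xi_N^\top(T) Z_N^1 \xi_N(T)$. Your added bookkeeping identifying $\psi_N^\top M_N^1 \psi_N$ with the boundary terms via the filter structure~\eqref{eq:filterN} is implicit in the paper and correct.
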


\begin{proof}
Let us introduce the following notation for $t \geq 0$:
\[
	\mathcal{I}_1(t) = \int_0^1 z^{\top}(x,t) S z(x,t) dx = \|z(\cdot, t)\|_S^2.
\]
$\mathcal{I}_1(t)$ is the energy of the PDE at a given time $t$.
Since $\mathcal{I}_1(0) = 0$, the following holds for $T \geq 0$:
\[
	\begin{array}{rl}
		\mathcal{I}_1(T) &\displaystyle= \int_0^1 \int_0^T \partial_t \left( z^{\top}(x,t) S z(x,t) \right) dt dx \\
		&\displaystyle= 2 \int_0^1 \int_0^T z_t^{\top}(x,t) S z(x,t) dt dx\\
		&\displaystyle= - \rho \! \int_0^T \int_0^1 2 z_x^{\top}(x,t) S z(x,t) dx dt\\
		&\displaystyle=\rho \! \int_0^T \!\!\! z^{\top}\!(0,t) S z(0,t) -\! z^{\top}\!(1,t) S z(1,t) dt.
	\end{array}
\]
From \eqref{eq:bessel}, we get $\mathcal{I}_1(T) \geq \sum_{k=0}^N (2k+1) \Omega_k^ {\top}(T) S \Omega_k(T)$. Combining with the two previous results yields
\begin{multline*}
	\rho \! \int_0^T \!\!\! z^{\top}\!(0,t) S z(0,t) -\! z^{\top}\!(1,t) S z(1,t) dt \\
	- \sum_{k=0}^N (2k+1) \Omega_k^ {\top}(T) S \Omega_k(T) \geq 0.
\end{multline*}
Then, the following IQC with terminal cost $Z$ is obtained:
\begin{equation*} \label{eq:IQCtransport1}
	\int_0^T \psi_N(t)^{\top} M_N^1 \psi_N(t) dt + \xi(T)^{\top} Z_N^1 \xi(T) \geq 0
\end{equation*}
for all $T \geq 0$.
\end{proof}
\begin{remark} Losslessness does not translate into an equality since we used Bessel inequality. Equality is recovered for $N \to \infty$.
% Since it is a lossless relation, an equality was expected as a perfect balance. The equality is lost when using Bessel inequality, and Parceval identity shows that for $N = \infty$, the equality indeed holds.
\end{remark}

\subsubsection{Finite horizon IQC with $Z = 0$.}

The previous inequality is about the energy balance between the input and the output. In this part, we are interested in the transmission of energy between the input and the PDE itself.
%Another IQC can be obtained considering how the energy of the PDE (given by its boundary condition at $x = 0$) is transmitted to the projection coefficients. This second inequality is important because it relates the energy of the filter to the energy of the PDE.

\begin{lemma} For $R \succ 0$, The finite horizon IQC \eqref{eq:finiteHorizonIQC} with terminal cost $Z = 0$ and with multiplier
\begin{equation} \label{eq:multiplierTransport2}
	M_N^2 = \left[ {\scriptsize \begin{array}{cc|c} R & 0 & 0 \\ 0 & 0_{l} & 0 \\ \hline 0 & 0 & - \diag \left( R, 3 R, \dots, (2N+1) R \right) \end{array}} \right]
	%M_N^2 = \diag \left( - R, -3 R, \dots, -(2N+1) R, R, 0_{n} \right)\!,
\end{equation}
 holds for system~\eqref{eq:transport} together with the filter \eqref{eq:filter2}.
\end{lemma}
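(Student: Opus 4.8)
The plan is to mirror the proof of the previous lemma, but to replace the plain spatial energy by a \emph{weighted} one that isolates the boundary input $z(0,t)=y(t)$. Concretely, I would introduce the functional
\[
	V(t) = \int_0^1 \frac{1-x}{\rho}\, z^{\top}(x,t) R z(x,t)\, dx,
\]
which satisfies $V(t)\geq 0$ for all $t$ (since $R\succ 0$ and $1-x\geq 0$ on $[0,1]$) and $V(0)=0$ (because of the zero initial condition). The whole point of the weight $\phi(x)=(1-x)/\rho$ is that it simultaneously kills the outgoing flux term at $x=1$ and turns the interior term into exactly $-\int_0^1 z^{\top}Rz\,dx$; in particular $z(1,t)$ will not enter the estimate, which is consistent with the zero $(2,2)$ block of $M_N^2$ in \eqref{eq:multiplierTransport2}.

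Next I would differentiate $V$ in time, substitute the transport dynamics $z_t=-\rho z_x$ from \eqref{eq:transport}, use $2z_x^{\top}Rz=\partial_x(z^{\top}Rz)$, and integrate by parts in $x$:
\[
	\dot V(t) = -\int_0^1 (1-x)\, \partial_x\!\left(z^{\top} R z\right) dx = z^{\top}(0,t) R z(0,t) - \int_0^1 z^{\top}(x,t) R z(x,t)\, dx,
\]
where the contribution at $x=1$ vanishes because $\phi(1)=0$, the term at $x=0$ carries coefficient $\phi(0)\rho=1$, and $\phi'(x)=-1/\rho$ produces the interior integral. Integrating from $0$ to $T$ and using $V(T)\geq 0 = V(0)$ then yields the energy transmission inequality
\[
	\int_0^T z^{\top}(0,t) R z(0,t)\, dt \;\geq\; \int_0^T \int_0^1 z^{\top}(x,t) R z(x,t)\, dx\, dt.
\]

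The remaining step brings in the projection coefficients: I would apply the Bessel inequality \eqref{eq:bessel} with the weight $R$ pointwise in $t$ and integrate it over $[0,T]$, obtaining $\int_0^T\!\int_0^1 z^{\top}Rz\,dx\,dt \geq \int_0^T \sum_{k=0}^N (2k+1)\,\Omega_k^{\top}(t) R \Omega_k(t)\, dt$. Chaining this with the previous inequality gives
\[
	\int_0^T z^{\top}(0,t) R z(0,t)\, dt \;\geq\; \int_0^T \sum_{k=0}^N (2k+1)\,\Omega_k^{\top}(t) R \Omega_k(t)\, dt,
\]
which is precisely $\int_0^T \psi_N^{\top}(t) M_N^2 \psi_N(t)\,dt \geq 0$, once one reads off $\psi_N=\col(z(0),z(1),\Omega_0,\dots,\Omega_N)$ and that $M_N^2$ weights $z(0)$ by $R$, $z(1)$ by $0$, and $\Omega_k$ by $-(2k+1)R$. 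Since the only surviving final-time quantity is the nonnegative $V(T)$ and all projection terms sit under the time integral, no terminal cost is needed, i.e.\ $Z=0$, in contrast with the previous lemma where the Bessel inequality was invoked at time $T$.

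The main obstacle is choosing the weight correctly: the argument hinges on $\phi(x)=(1-x)/\rho$ being the unique normalization for which the flux at $x=1$ cancels \emph{and} the interior term becomes $-\int_0^1 z^{\top}Rz\,dx$ with unit coefficient. Any other choice would either leave a residual $z(1,t)$ term, incompatible with the zero block of $M_N^2$, or produce a wrong constant in front of the Bessel term. Everything else is a routine combination of integration by parts and the already-established inequality \eqref{eq:bessel}, so it recovers the finite-horizon IQC \eqref{eq:finiteHorizonIQC} with $Z=0$.
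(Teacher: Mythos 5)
Your proof is correct and follows essentially the same route as the paper: the functional $V$ is exactly the paper's $\mathcal{I}_2(t)=\int_0^1 (1-x)\,z^{\top}(x,t)Rz(x,t)\,dx$ up to the harmless $1/\rho$ normalization, and you combine the same integration by parts, the nonnegativity of the weighted energy at time $T$, and the Bessel inequality \eqref{eq:bessel} to conclude the IQC with $Z=0$. The only cosmetic difference is that you differentiate $V$ in time and then integrate, whereas the paper computes $\mathcal{I}_2(T)$ directly via the identity \eqref{eq:transportMultiplier2}; the content is identical.
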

\begin{proof}
We start this time with an integration by parts:
\begin{equation*}
	\begin{array}{l}
		\displaystyle\int_0^1 \He \left( (1-x) z^{\top}\!(x,t) R z_x(x,t) \right)\!dx = \quad \quad \quad \quad \quad \quad \quad \quad\\
		\displaystyle\hfill 2 \left[ (1-x) z^{\top}\!(x,t) R z(x,t) \right]_0^1 + 2 \| z(\cdot, t) \|_R^2 \quad \quad \\
		\displaystyle\hfill - \int_0^1 \He \left( (1-x) z_x^{\top}(x,t) R z(x,t) \right) dx.
	\end{array}
\end{equation*}
In other words, the following holds:
\begin{equation} \label{eq:transportMultiplier2}
	\begin{array}{l}
		z^{\top}\!(0,t) R z(0,t) = \| z(\cdot, t) \|_R^2 \hspace{3.8cm}\\
		\displaystyle\hfill - \int_0^1 \He \left( (1-x) z^{\top}\!(x,t) R z_x(x,t) \right)\!dx.
	\end{array}
\end{equation}
In a similar way as previously, define the following for $t \geq 0$:
\[
	\mathcal{I}_2(t) = \int_0^1 (1-x) z^{\top}(x,t) R z(x,t) dx \leq \|z(\cdot,t)\|_R^2.
\]
Then, we get for $T \geq 0$:
\[
	\begin{array}{rl}
		\!\mathcal{I}_2(T) &\displaystyle= 2 \int_0^1 (1-x) \int_0^T z^{\top}(x,t) R z_t(x,t) dt dx \\
		&\displaystyle= - 2 \rho \int_0^T \int_0^1 (1-x) z^{\top}(x,t) R z_x(x,t) dx dt.
	\end{array}
\]
Using equation \eqref{eq:transportMultiplier2} and inequality \eqref{eq:bessel}, we get:
\[
	\begin{array}{rl}
		\mathcal{I}_2(T) &\displaystyle= \rho \int_0^T z^{\top}(0,t) R z(0,t) - \|z(\cdot,t)\|_R^2 dt\\
		&\displaystyle \leq \rho \int_0^T z^{\top}(0,t) R z(0,t) \\
		&\displaystyle \hfill - \sum_{k=0}^N (2k+1) \Omega_k^{\top}(t) R \Omega_k(t) dt.
	\end{array}
\]
Since $\mathcal{I}_2(t) \geq 0$ for $t \geq 0$, the previous results lead to:
\[
	\int_0^T \psi_N(t) M_N^2 \psi_N(t) dt \geq 0.
\]
%Considering the previous equation and integrating it over $[0, T]$, after introducing on both side of the equality a quadratic term related to the projection coefficients yields:
%\[
%	\begin{array}{l}
%		\displaystyle\int_0^T z^{\top}\!(0,t) R z(0,t) - \sum_{k=0}^N (2k+1) \Omega_k^{\top}(t) R \Omega_k(t) dt \quad \quad \\
%		\displaystyle= \int_0^T \| z(\cdot, t) \|_R^2 - \int_0^1 \He \left( (1-x) z^{\top}\!(x,t) R z_x(x,t) \right)\!dx \\
%		\displaystyle\hfill - \sum_{k=0}^N (2k+1) \Omega_k^{\top}(t) R \Omega_k(t) dt \\
%		\displaystyle\geq - \int_0^T \int_0^1 \He \left( (1-x) z^{\top}\!(x,t) R z_x(x,t) \right)\!dx dt \\
%		\displaystyle=\int_0^1 \frac{1-x}{\rho} \int_0^T \He \left( z^{\top}\!(x,t) R z_t(x,t) \right)\!dt dx \\
%		\displaystyle=\int_0^1 \frac{1-x}{\rho} \left( z^{\top}(x,T) R z(x,T) - z^{\top}(x,0) R z(x,0) \right) dx \\
%		\displaystyle=\int_0^1 \frac{1-x}{\rho} z^{\top}(x,T) R z(x,T) dx  - \rho^{-1} \| z_0 \|_R^2 \\
%		\displaystyle\geq - \rho^{-1} \| z_0 \|_R^2.
%	\end{array}
%\]
%Consequently, the following holds:
%\[
%	\int_0^T \psi_N(t) M_2 \psi_N(t) dt \geq - \rho^{-1} \| z_0 \|_R^2.
%\]
\vspace*{-1.1cm}

\end{proof}

Finally, taking $M_N = M^1_N + M^2_N$ and $Z_N = Z^1_N$, we get that $M_N$ defines an IQC with terminal cost $Z_N$ for the uncertainty described in Example~1. The second step of the methodology has then been successfully applied. The following section is dedicated to an application of Theorem~\ref{theo:IQC} for a time-delay system. Before that, a short discussion on the relation with Lyapunov functionals is included.

\subsubsection{On the relation with Lyapunov functionals.}

A Lyapunov-based stability theorem for this problem has been derived in \cite{safi2017tractable}. The functional used is made up of $\mathcal{I}_1$ and $\mathcal{I}_2$. After some computations, one can show that the LMIs resulting from Theorem~\ref{theo:IQC} are the same than the ones obtained in \cite{safi2017tractable}.

Nevertheless, we provide here a more generic framework and the proposed methodology can benefit from all the advances made with IQCs such as performance analysis and already existing multipliers for some classes of non-linearities (see \citep{veenman2016robust} and references therein). In this way, we can get stronger IQCs, and benefit from all the advances made in that field.

\section{Numerical Example and Discussion}

We consider the time-delay system in Example~1 with:
\[
	A = \left[ \begin{smallmatrix} 0 & 0 & 1 & 0 \\ 0 & 0 & 0 & 1 \\ -10-k & 10 & 0 & 0 \\ 5 & -15 & 0 & -0.25 \end{smallmatrix} \right], \quad \quad B = \left[ \begin{smallmatrix} 0 & 0 & 0 & 0 \\ 0 & 0 & 0 & 0 \\ k & 0 & 0 & 0 \\ 0 & 0 & 0 & 0 \end{smallmatrix} \right],
\]
for $k > 0$. This example describes a regenerative chatter taken from \cite{opac-b1100602,seuret:hal-01065142}. This system is a good toy example as finding the values of $(k, \rho)$ making the system stable is not straightforward. Nevertheless, using the Control Toolbox of Matlab\textregistered \ (as in \citep{seuret:hal-01065142} for instance), the stability areas can be exactly obtained.

A first analysis shows that, in the delay independent case, the maximum allowable $k$ is $k_{max} = 0.3$. Using the Small-Gain theorem leads to the stability for $k \leq 0.104$, for [Megretski, \citeyear{megretski1997system}], the maximum $k$ is $0.265$, while using the method described in this paper, we get $0.299$. Even at low order, the proposed methodology is very efficient for dealing with hyperbolic equations.

We are interested now in the numerical analysis in the case of a fixed $k = 2$. The results are displayed in Table~\ref{tab:ex1}. To obtain this table, we used the multiplier obtained in the previous section and we solved LMIs \eqref{eq:kernelKYP} and \eqref{eq:positivity} with a grid on $h$.

First of all, we can note the hierarchy property of Theorem~\ref{theo:IQC} with this example. Indeed, the stable intervals for $N = 0$ are included in the others obtained for higher values of $N$. And this is also the case for the order $2$ and $5$. We can note that for $N = 5$ or $7$, there are at least two stability pockets detected, meaning that Theorem~\ref{theo:IQC} can recover stable intervals of the form $\rho^{-1} \in [h^-, h^+]$ with a possible non zero $h^-$. And for $N = 7$, the detected stable intervals are close to the exact ones, meaning that the theorem provides a precise estimate of the stability area.

The comparison with other IQC theorems shows that Theorem~\ref{theo:IQC} is different in nature. Indeed, the IQC theorems in \cite{megretski1997system} and \cite{veenman2016robust} provide stability for a constant delay between $0$ and a maximum value. Consequently, they cannot detect stability pockets but they ensure the robustness for the delay varying between the two obtained bounds. 
%In this sense, they are closely related to Theorem~\ref{theo:IQC} with $N = 0$. Indeed, It is possible to show that Theorem~\ref{theo:IQC} at the order $0$ gives a LMI condition convex in $h$ and therefore, it is stable for any delay between $0$ and a maximum bound as well.

\begin{table}
	\centering
	\begin{tabular}{c||c|c|c}
		Stable Intervals & $[0, 0.859]$ & $[1.117, 1.264]$ & $[2.75, 3.5]$ \\
		\hline
		Theo. 2, N = 0 & $[0, 0.062]$ & $-$ & $-$ \\
		Theo. 2, N = 2 & $[0, 0.854]$ & $-$ & $-$ \\
		Theo. 2, N = 5 & $[0, 0.859]$ & $[1.123, 1.264]$ & $-$ \\
		Theo. 2, N = 7 & $[0, 0.859]$ & $[1.117, 1.264]$ & $[2.83, 3.36]$ \\
		\hline
		[Megretski, \citeyear{megretski1997system}] & $[0, 0.062]$ & $-$ & $-$ \\
		\ \![Veenman, \citeyear{veenman2016robust}] & $[0, 0.060]$ & $-$ & $-$
	\end{tabular}
	\caption{Table of stable values for $\rho^{-1}$. The first row depicts the exact stable intervals and rows 2 to 5 are coming from Theorem~\ref{theo:IQC}.}
	\label{tab:ex1}
\end{table}

\section{Conclusion \& Perspectives}

This paper presented a preliminary work on the robust analysis of infinite-dimensional systems coupled to an ODE. It introduces a different way to assess its stability using IQCs. This approach gives insights about how to build a filter deriving the projections coefficients which helps characterizing the uncertainty. 
%We define a new kind of IQC, since they are not generated by dissipation inequalities for a given infinite-dimensional system involving the whole state. 
We use a new kind of IQCs, which is valid for many more PDEs and, in that sense, we obtain a robustness result. Moreover, it has been shown with an example that, for the transport equation, it leads to the same result as using one of the most recent Lyapunov functionals in the literature.

Furthermore, this paper offers many perspectives, that were not raised with the Lyapunov approach. For instance, further work concerns the wellposedness of the interconnection and a methodology for deriving the IQCs. New tools were introduced and it would be interesting to pursue this direction in order to give a better interpretation of the filter. The interpretation of each IQC in terms of energy would help building new dissipation inequalities that are less conservative. 
%Indeed, the presented theorem provides a sufficient condition for the stability of the interconnection. Going  A necessity part would also bring new insights such as a better understanding of the asymptotic behavior.

%\bibliographystyle{plain}
\bibliography{report_draft}

\end{document}